\documentclass[preprint,10pt,authoryear]{elsarticle}

\usepackage[margin=3cm]{geometry}
\usepackage{bbm}
\usepackage{amssymb,mathrsfs,amsmath,amsthm}
\usepackage[colorlinks=true]{hyperref}
\usepackage{dsfont}
\usepackage[latin1]{inputenc}

\newcommand\NoBlackBoxes{\global\overfullrule0pt}
\NoBlackBoxes

\oddsidemargin0cm
\evensidemargin-0.2cm \topmargin-.5cm
\textheight22.5cm \textwidth15cm

 \parindent 0cm

\newtheorem{definition}{Definition}[section]
\newtheorem{theorem}[definition]{Theorem}
\newtheorem{lemma}[definition]{Lemma}

\newtheorem{example}[definition]{Example}

\newcommand{\R}{{\mathbb R}}
\newcommand{\Z}{{\mathbb Z}}

\newcommand{\E}{{\mathbb E}}

\newcommand{\cov}{\mathrm{Cov}}

\newcommand{\X}{\textbf{X}}
\newcommand{\tr}{\mathrm{tr}}
\newcommand{\nc}{\mathcal{N}\mathcal{P}\mathcal{P}(k)}
\newcommand{\bpi}{\boldsymbol{\pi}}

\journal{Statistics \& Probability Letters}

\begin{document}

\begin{frontmatter}

\title{The semicircle law for matrices with ergodic entries}

\author{Matthias L\"owe}
\address{Fachbereich Mathematik und Informatik,
University of M\"unster,
Einsteinstra\ss e 62,
48149 M\"unster,
Germany}

\ead{maloewe@math.uni-muenster.de}



\date{\today}


\begin{keyword}
{random matrix; semi-circle law; dependent entries; ergodicity; decay of correlations}

\MSC[2010] 60F05, 60B20
\end{keyword}

\begin{abstract}
We study the empirical spectral distribution (ESD) of symmetric random matrices with ergodic entries on the diagonals. We observe that for entries with correlations that decay to 0, when the distance of the diagonal entries becomes large the limiting ESD is the well known semicircle law. If it does not decay to 0 (and have the same sign) the semicircle law cannot be the limit of the ESD. This is good agreement with results on exchangeable processes analysed in \cite{FL12} and \cite{HKW15}.
\end{abstract}

\end{frontmatter}

\section{Introduction}
Studying the (ESD) of large random matrices has been a major research topic in probability over the past decades. Already Wigner \cite{Wigner} and Arnold \cite{Arnold}
showed that the ESD of symmetric or Hermitian random matrices with i.i.d. entries, otherwise, under appropriate
scaling converges to the semi-circle law. Other versions of Wigner's semi-circle law were proved for matrix ensembles with entries
drawn according to weighted Haar measures on classical groups see e.g. \cite{AGZ}.
This note analyzes the universality of the semi-circle law for matrices with stochastically correlated entries.
Similar attempts have been made in \cite{schulzbaldes}, where a limited number of correlated entries are admitted,
\cite{goetze_tikho}, where a martingale structure of the entries is imposed, \cite{FL13}, where the diagonals of the (symmetric) matrices were filled with independent Markov chains or in \cite{LS17}, where the upper triangular part of the matrix is filled with a one-dimensional Markov chain. On the other hand, \cite{FL12} and \cite{HKW15} study matrices where either the diagonals or the entire matrix is built of exchangeable random variables. In particular, in \cite{FL12} it was shown that there is a phase transition: If the correlation of the exchangeable random variables go to 0, the limit of the ESD is the semi-circle law, while otherwise it can be described in terms of a free convolution of the semi-circle law with a limiting law obtained in \cite{brycdembo}.
In this note we will answer a question by C. Deninger (private communication) and extend the results from \cite{FL12} and \cite{FL13} to ergodic sequences of random variables. We will see that the convergence of the ESD only depends on the correlations of the entries. If they converge to 0, as the distance grows to infinity, the limiting ESD is again the semicircle law, while this is not the case if the correlations have the same sign and are bounded away from 0. This result is obtained by weakening the conditions in \cite{FL13} and is in good agreement with the results from \cite{FL12}.

The rest of the note is organized as follows. In the second section we will formalize the situation we want to consider and state our main result. Section 3 is devoted to the proof, that is based on a moment method. Finally, we will also give an example.

{\bf Acknowledgement}: This work was supported by SFB 878.

I am very grateful to remarks of an anonymous referee, who spotted a gap in an earlier version of the paper and suggested the correct solution.
\section{Main Results}

The central object in this note is the semi-circle distribution. Its density is given by
$\frac{d\mu}{dx}(x)=\frac 1 {2\pi} \sqrt {4-x^2} \mathbbm{1}_{[-2,2]}(x)$.
To fill our random matrix we will consider random variables
$$\left\{a(p,q), 1\leq p\leq q< \infty\right\} \mbox{   with values in $\R$.}$$
We assume that $\mathbb{E}\left[a(p,q)\right]=0$, $\mathbb{E}\left[a(p,q)^{2}\right] = 1$ and, to be able to employ the moment method,
\begin{equation}\label{moment}
 m_k:=\sup_{n\in\mathbb{N}} \max_{1\leq p\leq q\leq n} \mathbb{E}\left[\left|a(p,q)\right|^{k}\right] < \infty, \quad k\in\mathbb{N}.
\end{equation}
Moreover we construct the families $\left\{a(p,p+r), p\in\mathbb{N}\right\}$, $r\in\mathbb{N}_0$, the diagonals of our future matrices, as independent random variables. On the diagonals we will assume that for every pair $r$ the process $Z^{r}_p:=(a(p,p+r))_{p \in \Z}$ is a stationary stochastic process. We will assume that this process is ergodic.
For $n\in \mathbb{N}$, define the symmetric random $n\times n$ matrix $\X_n$:
\begin{equation*}
 \X_n (q,p) = \X_n (p,q) = \frac{1}{\sqrt{n}} a(p,q), \qquad 1\leq p\leq q\leq n,
\end{equation*}
We will denote the (real) eigenvalues of $ \textbf{X}_{n}$ by $\lambda_1^{(n)} \le \lambda_2^{(n)} \le \ldots \lambda_n^{(n)}$. Let $\mu_n$ be the empirical eigenvalue distribution, i.e.
$
\mu_n = \frac{1}{n} \sum_{k=1}^n \delta_{\lambda_k^{(n)}}.$
We want to show:
\begin{theorem}
Assume that $\X_n$ satisfies the above conditions and the diagonal processes $(Z^{(r)}_p)$ have correlations that decay to 0, i.e.
$\E(Z^{(r)}_1 Z^{(r)}_p) \to 0$ for all $r$ as $p \to \infty$ (recall that $\E Z^{(r)}_1 =0$).
Then the ESD of $\textbf{X}_n$ converges weakly to the semi-circle distribution, i.e.
$
\mu_n \Rightarrow \mu \mbox{ as } n \to \infty
$
both, in expectation and $\mathbb{P}-\mbox{almost surely}$.

If the correlations of $(Z^{(r)}_p)$ all have the same sign and are bounded away from 0 the ESD of $\textbf{X}_n$ does not converge to $\mu$.
\label{main}
\end{theorem}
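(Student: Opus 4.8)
The idea is to show that the fourth moment of the expected ESD stays bounded away from $\int x^{4}\,d\mu=2$; since almost sure weak convergence $\mu_n\Rightarrow\mu$ would imply $\E\mu_n\Rightarrow\mu$ (bounded convergence on bounded continuous test functions), it suffices to contradict the latter. With the convention $a(p,q)=a(q,p)$ one expands
\[
\int x^{4}\,d\E\mu_n=\E\Big[\tfrac1n\tr\X_n^4\Big]=\frac1{n^{3}}\sum_{i_1,i_2,i_3,i_4=1}^{n}\E\big[a(i_1,i_2)\,a(i_2,i_3)\,a(i_3,i_4)\,a(i_4,i_1)\big],
\]
and sorts the tuples $(i_1,\dots,i_4)$ by the partition of their entries and by the diagonal $|i_a-i_b|$ of each edge $\{i_a,i_{a+1}\}$. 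Tuples with at most two distinct entries number $O(n^{2})$ and, by \eqref{moment} and H\"older, contribute $O(1/n)$. Tuples with exactly three distinct entries and no immediate repetition are the closed double-tree walks of length $4$, of which there are two combinatorial types; each uses two distinct edges, each traversed twice, so the summand is $\E[a(e)^{2}a(e')^{2}]$, equal to $1$ by $\E[a(\cdot)^{2}]=1$ and independence of distinct diagonals, apart from the $O(n^{2})$ tuples where $e,e'$ land on a common diagonal, which again contribute $O(1/n)$; this yields the contribution $2+O(1/n)$. All remaining tuples with at most three distinct vertices either carry an isolated mean-zero factor (here one uses independence of the diagonal families and $\E[a(\cdot)]=0$) or have too few free parameters, hence are $O(1/n)$.

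The decisive term comes from the $\sim n^{4}$ tuples with four distinct entries. There the four edges are pairwise distinct and each traversed once, so by independence of the diagonals and $\E[a(\cdot)]=0$ the summand vanishes unless every diagonal present carries at least two of the four edges; a short case analysis of which pairs of edges of a $4$-cycle can lie on a common diagonal shows that the only surviving configuration is the parallelogram $i_1+i_3=i_2+i_4$. Writing such a tuple as $(p,\,p+u,\,p+u+v,\,p+v)$ with $u,v\neq0$ and $u\neq\pm v$, the edges $\{i_1,i_2\},\{i_3,i_4\}$ lie on the diagonal $r:=|u|$ at positions at distance $|v|$, while $\{i_2,i_3\},\{i_4,i_1\}$ lie on the diagonal $s:=|v|\neq r$ at positions at distance $|u|$, so by stationarity and independence of the diagonals
\[
\E\big[a(i_1,i_2)a(i_2,i_3)a(i_3,i_4)a(i_4,i_1)\big]=\g_{|u|}(|v|)\,\g_{|v|}(|u|),\qquad \g_t(d):=\E\big[Z^{(t)}_1 Z^{(t)}_{1+d}\big],
\]
and hence
\[
\E\Big[\tfrac1n\tr\X_n^4\Big]=2+\frac1{n^{3}}\sum_{(p,u,v)}\g_{|u|}(|v|)\,\g_{|v|}(|u|)+O(1/n),
\]
the sum over $(p,u,v)$ with $u,v\neq0$, $u\neq\pm v$ and all four vertices in $\{1,\dots,n\}$, of which there are $c'n^{3}(1+o(1))$ for some constant $c'>0$.

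Under the present hypothesis there is $c>0$ such that, for every $t$, the numbers $\g_t(1),\g_t(2),\dots$ all have one and the same sign and modulus $\geq c$; then each product $\g_{|u|}(|v|)\g_{|v|}(|u|)$ is a product of two reals of equal sign and modulus $\geq c$, hence $\geq c^{2}$. Therefore the parallelogram sum is $\geq c^{2}c'+o(1)$, so $\liminf_{n}\E[\tfrac1n\tr\X_n^4]\geq 2+c^{2}c'>2$. Finally, the counting estimates behind the first part of the theorem — which use only \eqref{moment} and not the decay of correlations — give $\sup_n\E[\tfrac1n\tr\X_n^6]<\infty$, so $x^{4}$ is uniformly integrable under $\E\mu_n$; were $\E\mu_n\Rightarrow\mu$, this would force $\E[\tfrac1n\tr\X_n^4]\to\int x^{4}\,d\mu=2$, contradicting the displayed lower bound. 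Hence $\mu_n$ does not converge to $\mu$, in expectation or almost surely.

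\textbf{Main obstacle.} The real work is the combinatorial bookkeeping of the two middle paragraphs: showing that among the $O(n^{3})$ relevant families of index tuples only the two double trees and the parallelograms survive in the limit and everything else is $O(1/n)$ — in particular the exact vanishing of the non-parallelogram four-vertex terms (the case analysis of coincident diagonals) and the negligibility of the degenerate sub-cases. The uniform-integrability step and the count $c'n^{3}$ are routine.
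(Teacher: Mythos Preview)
Your argument for the second assertion is correct and follows the same idea as the paper: compute the fourth moment of $\E\mu_n$ and show that the crossing pair partition of $\{1,2,3,4\}$ contributes a positive amount beyond $C_2=2$. The paper carries this out inside the partition framework $S_n^*(\pi)$ built for the first part---the two non-crossing pair partitions give exactly $C_2$ by Lemmas~\ref{le1} and~\ref{le2}, and for the crossing partition the paper cites Hammond--Miller for the lower bound $\#S_n^*(\pi)\ge\tfrac23 n^3$ before invoking the covariance hypothesis. Your vertex-count decomposition is an equivalent, more self-contained reorganisation: the parallelogram constraint $i_1+i_3=i_2+i_4$ is precisely $S_n^*(\pi)$ for the crossing $\pi=\{\{1,3\},\{2,4\}\}$, and you identify it and bound its count from below directly instead of by citation. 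You are also more careful than the paper on one point it leaves implicit: you explain, via the uniform sixth-moment bound (which indeed uses only the singleton-vanishing argument and \eqref{moment}, not decay of correlations), why $\liminf_n\E[\tfrac1n\tr\X_n^4]>2$ genuinely obstructs $\mu_n\Rightarrow\mu$.

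One small wording caution: your phrase ``for every $t$, the numbers $\gamma_t(1),\gamma_t(2),\dots$ all have one and the same sign'' could be read as allowing the sign to depend on $t$; your subsequent claim that $\gamma_{|u|}(|v|)\,\gamma_{|v|}(|u|)\ge c^2$ needs the sign to be common across \emph{all} diagonals, which is how the paper states the hypothesis.
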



\section{Proof of Theorem \ref{main}}
The ideas in the first section of the proof partially follow \cite{FL13} which in turn are based on considerations in \cite{schulzbaldes}. For the reader's convenience and since we will need the arguments for the second part of the proof as well, we will illustrate the main steps.
We will use the method of moments to prove Theorem \ref{main}. To this end, recall that the even moments of the semicircle law are given
by the Catalan numbers $C_{\frac{k}{2}} = \frac{k!}{\frac{k}{2}!\left(\frac{k}{2}+1\right)!}$, while the odd moments are $0$.
The weak convergence of the expected empirical distribution will follow from the convergence of the expected moments
\begin{equation*}
 \lim_{n\to\infty} \frac{1}{n} \E\left[\tr\left(\X_{n}^{k}\right)\right] = \begin{cases} 0, & \text{if} \ k \ \text{is odd}, \\ C_{\frac{k}{2}}, & \text{if} \ k \ \text{is even,} \end{cases}
\end{equation*}
where $\tr(\cdot)$ denotes the trace operator.
To see that this is true, we obviously need to compute the moments. To this end consider the set $\mathcal{T}_n(k)$ of $k$-tuples of so-called consistent pairs, i.e. elements of the form $\left(P_1,\ldots,P_k\right)$ with $P_j = (p_j,q_j) \in \left\{1,\ldots,n\right\}^2$ satisfying $q_j = p_{j+1}$ for any $j=1,\ldots,k$, where $k+1$ is identified with $1$. Then, we have
\begin{equation*}
 \frac{1}{n} \E\left[\tr\left(\X_{n}^{k}\right)\right] = \frac{1}{n^{1+\frac{k}{2}}} \sum_{\left(P_1,\ldots,P_k\right)\in\mathcal{T}_n(k)} \E\left[a(P_1)\cdot \ldots \cdot a(P_k)\right].
\end{equation*}
Define $\mathcal{P}(k)$ to be the set of all partitions $\pi$ of $\left\{1,\ldots,k\right\}$ and write $i \sim_\pi j$ for $i,j \in \left\{1,\ldots,k\right\}$, iff
$i$ and $j$ belong to the same set of the partition $\pi$.
We call $\left(P_1,\ldots,P_k\right)\in\mathcal{T}_n(k)$ a $\pi$-consistent sequence if
$\left|p_i - q_i\right| = \left|p_j - q_j\right|$ iff $i$ and $j$ are equivalent w.r.t $\pi$.
The idea of this construction is that $a(P_{i_1}),\ldots,a(P_{i_l})$ are independent when $i_1,\ldots,i_l$ belong to $l$ different blocks of $\pi$. Let $S_n(\pi)$ be the set of all $\pi$ consistent sequences. 
Hence
\begin{equation*}
 \frac{1}{n} \E\left[\tr\left(\X_{n}^{k}\right)\right] = \frac{1}{n^{1+\frac{k}{2}}} \sum_{\pi \in \mathcal{P}(k)} \sum_{\left(P_1,\ldots,P_k\right)\in S_n(\pi)} \E\left[a(P_1)\cdot \ldots \cdot a(P_k)\right].
\label{sum}
\end{equation*}
Now fix  $k\in\mathbb{N}$. For $\pi\in\mathcal{P}(k)$ let $\# \pi$ denote the number of equivalence classes of $\pi$.

Let us first consider the case $\# \pi > \frac{k}{2}$.
Then there is at least one equivalence class with a single element $l$. By independence of the elements in different equivalence classes this implies for any sequence $\left(P_1,\ldots,P_k\right)\in S_n(\pi)$ that
$ \E\left[a(P_1)\cdot \ldots \cdot a(P_k)\right] = \E\Big[\prod_{i\neq l}a(P_i)\Big] \cdot \E\left[a(P_l)\right] = 0.
$
%
Next we examine the case
$r:= \# \pi < \frac{k}{2}$.
To compute $\# S_n(\pi)$, fix $\left(P_1,\ldots,P_k\right)\in S_n(\pi)$. To this end, first choose the pair $P_1 = (p_1,q_1)$. There are at most $n$ possibilities to assign a value to $p_1$ and $n$ possibilities for $q_1$. For $P_2 = (p_2,q_2)$, note that the consistency of the pairs implies $p_2 = q_1$. If $1\sim_\pi 2$, the condition $\left|p_1 - q_1\right| = \left|p_2 - q_2\right|$ allows at most two choices for $q_2$. Otherwise, we have at most $n$ possibilities. We now proceed sequentially to determine the remaining pairs: When arriving at some index $i$, we check whether $i$ is equivalent to any preceding index $1,\ldots,i-1$. If this is true, then we have at most two choices for $P_i$, otherwise, we have $n$. Since there are exactly $r$ different equivalence classes, we see that
 $\# S_n(\pi) \leq C \cdot n^{r+1}$
with a constant $C=C(r,k)$ depending on $r$ and $k$.
H\"{o}lder's inequality implies that for all sequences $(P_1,\ldots,P_k)$,
\begin{equation}
 \left|\E\left[a(P_1)\cdot \ldots \cdot a(P_k)\right] \right| \leq \left[\E\left|a(P_1)\right|^{k}\right]^{\frac{1}{k}} \cdot \ldots \cdot \left[\E\left|a(P_k)\right|^{k}\right]^{\frac{1}{k}} \leq m_k <\infty.
\label{holder}
\end{equation}
As $r< \frac{k}{2}$, we get
\begin{equation*}
 \frac{1}{n^{1+\frac{k}{2}}} \sum_{\underset{\# \pi < \frac{k}{2}}{\pi \in \mathcal{P}(k),}} \sum_{\left(P_1,\ldots,P_k\right)\in S_n(\pi)} \left|\E\left[a(P_1)\cdot \ldots \cdot a(P_k)\right] \right| \leq C \cdot \frac{1}{n^{1+\frac{k}{2}}} \cdot n^{r+1} = o(1).
\end{equation*}
Thus
$ \lim_{n\to\infty} \frac{1}{n} \E\left[\tr\left(\X_{n}^{k}\right)\right] = \lim_{n\to\infty} \frac{1}{n^{1+\frac{k}{2}}} \sum_{\underset{\# \pi = \frac{k}{2}}{\pi \in \mathcal{P}(k),}} \sum_{\left(P_1,\ldots,P_k\right)\in S_n(\pi)} \E\left[a(P_1)\cdot \ldots \cdot a(P_k)\right],
$
if the limits exist.

For odd $k$ this automatically implies
 $\lim_{n\to\infty} \frac{1}{n} \E\left[\tr\left(\X_{n}^{k}\right)\right] = 0$.

\noindent For even $k$ denote by $\mathcal{P}\mathcal{P}(k)\subset \mathcal{P}(k)$
the set of all pair partitions of $\left\{1,\ldots,k\right\}$. Thus $\# \pi = \frac{k}{2}$ for all $\pi \in \mathcal{P}\mathcal{P}(k)$.
However, if $\#\pi = \frac{k}{2}$ but $\pi \notin \mathcal{P}\mathcal{P}(k)$, we see that $\pi$ has at least one equivalence class with a
single element and hence, as above, the expectation corresponding to the $\pi$ consistent sequences will become 0, implying that
\begin{equation*}
 \lim_{n\to\infty} \frac{1}{n} \E\left[\tr\left(\X_{n}^{k}\right)\right] = \lim_{n\to\infty} \frac{1}{n^{1+\frac{k}{2}}} \sum_{\pi\in\mathcal{P}\mathcal{P}(k)} \sum_{\left(P_1,\ldots,P_k\right)\in S_n(\pi)} \E\left[a(P_1)\cdot \ldots \cdot a(P_k)\right],
\end{equation*}
if the limits exist.
Next we want to fix a $\pi\in\mathcal{P}\mathcal{P}(k)$ and analyze the set $S_n(\pi)$. To this end
we denote by
$S_{n}^{*}(\pi) \subseteq S_n(\pi)$ the set of $\pi$ consistent sequences $(P_1,\ldots,P_k)$ satisfying
$
 i\sim_\pi j \Longrightarrow  q_i - p_i = p_j - q_j$
for all $i\neq j$. Then, we have
$ \# \left(S_n(\pi)\backslash S_{n}^{*}(\pi)\right) = o\left(n^{1+\frac{k}{2}}\right).
$
This is indeed well known. The proof can be found in \cite{FL13}, Lemma 3.1 or \cite{brycdembo}, Proposition 4.4.
\noindent We hence obtain
\begin{equation*}
 \lim_{n\to\infty} \frac{1}{n} \E\left[\tr\left(\X_{n}^{k}\right)\right] = \lim_{n\to\infty} \frac{1}{n^{1+\frac{k}{2}}} \sum_{\pi\in\mathcal{P}\mathcal{P}(k)} \sum_{\left(P_1,\ldots,P_k\right)\in S_{n}^{*}(\pi)} \E\left[a(P_1)\cdot \ldots \cdot a(P_k)\right],
\end{equation*}
if the limits exist. We now come to the heart of the proof and call a pair partition $\pi \in \mathcal{P}\mathcal{P}(k)$ crossing if there are indices $i<j<l<m$ with $i\sim_\pi l$ and $j\sim_\pi m$. Otherwise, we call $\pi$ non-crossing. The set of all non-crossing pair partitions is denoted by $\nc$.
It is decisive to see that the set of crossing partitions does not contribute in the limit.
\begin{lemma}
If the diagonal processes $(Z^{(r)}_p)$ have correlations that decay to 0 as $p \to \infty$, for all $\pi \in \mathcal{P}\mathcal{P}(k) \backslash \nc$, we have
\begin{equation*}
 \sum_{\left(P_1,\ldots,P_k\right)\in S_{n}^{*}(\pi)} \E\left[a(P_1)\cdot \ldots \cdot a(P_k)\right] = o\left(n^{\frac{k}{2}+1}\right).
\end{equation*}
\label{crossing}
\end{lemma}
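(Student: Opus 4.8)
The plan is to use the independence of the diagonals to factor the expectation, reduce to a purely combinatorial summation bound, and then exploit the crossing to average one correlation factor over a window of length of order $n$, which the decay hypothesis annihilates.

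\emph{Step 1 (factorisation).} Fix $\pi\in\mathcal{P}\mathcal{P}(k)\backslash\nc$ and $(P_1,\ldots,P_k)\in S_{n}^{*}(\pi)$. By $\pi$-consistency the two indices of a block $\{i,j\}\in\pi$ lie on the same diagonal $r_{ij}:=|p_i-q_i|=|p_j-q_j|$, and distinct blocks use distinct values of $r$. Writing $s_\ell:=\min\{p_\ell,q_\ell\}$ and $r_\ell:=|p_\ell-q_\ell|$, one has $a(P_\ell)=Z^{(r_\ell)}_{s_\ell}$, so by independence of the diagonals and stationarity on each of them
\[
 \E\big[a(P_1)\cdots a(P_k)\big]=\prod_{\{i,j\}\in\pi}\rho_{r_{ij}}\big(|s_i-s_j|\big),\qquad \rho_r(m):=\E\big[Z^{(r)}_1 Z^{(r)}_{1+m}\big].
\]
Cauchy--Schwarz gives $|\rho_r(m)|\le\rho_r(0)=1$, so it suffices to show $\sum_{(P_1,\ldots,P_k)\in S_{n}^{*}(\pi)}\prod_{\{i,j\}\in\pi}\big|\rho_{r_{ij}}(|s_i-s_j|)\big|=o(n^{k/2+1})$.

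\emph{Step 2 (the crossing yields an affine dependence).} Parametrise an element of $S_{n}^{*}(\pi)$ by $p_1$ and the signed increments $t_\ell=p_{\ell+1}-p_\ell$; the defining relation of $S_{n}^{*}(\pi)$ forces $t_i=-t_j$ whenever $i\sim_\pi j$, so the increments are determined by one value $u_b$ per block $b$ (subject to $p_\ell\in\{1,\dots,n\}$ for all $\ell$ and $|u_b|\neq|u_{b'}|$ for $b\neq b'$), and $\#S_{n}^{*}(\pi)=O(n^{k/2+1})$ as in the counting already carried out. Pick $i_0<j_0<l_0<m_0$ witnessing the crossing and set $b_0:=\{i_0,l_0\}$, $b_1:=\{j_0,m_0\}$. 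A direct computation gives $|s_{i_0}-s_{l_0}|=\big|\sum_{i_0<\ell<l_0}t_\ell\big|$, and since $j_0\in(i_0,l_0)$ while $m_0\notin(i_0,l_0)$, the increment $u_{b_1}$ occurs exactly once in this sum (via $t_{j_0}$) whereas $u_{b_0}$ does not occur at all. Hence, with $p_1$ and the $u_b$, $b\notin\{b_0,b_1\}$, frozen, the lag $d_{b_0}:=|s_{i_0}-s_{l_0}|$ equals $|u_{b_1}-c|$ for some constant $c$, while the diagonal $r_{b_0}=|u_{b_0}|$ of the factor $\rho_{r_{b_0}}(d_{b_0})$ does not involve $u_{b_1}$.

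\emph{Step 3 (splitting and summing).} Fix a large integer $M$. The number of sequences in $S_{n}^{*}(\pi)$ with $d_{b_0}\le M$ is $O(M\,n^{k/2})$, since after choosing $p_1$ and the $u_b$, $b\neq b_1$, the variable $u_{b_1}$ is confined to at most $2M+1$ values. On the complement, bound every factor except that of $b_0$ by $1$ and sum the remaining one over $u_{b_1}$ first: with the other parameters frozen,
\[
 \sum_{u_{b_1}:\,d_{b_0}>M}\big|\rho_{r_{b_0}}(|u_{b_1}-c|)\big| \;\le\; 2\sum_{d=M+1}^{2n}|\rho_{r_{b_0}}(d)| .
\]
Since $\rho_r(d)\to0$, Cesàro summation gives $N^{-1}\sum_{d\le N}|\rho_r(d)|\to0$ for each fixed $r$; multiplying by the $O(n^{k/2-1})$ choices of the remaining parameters and summing over $u_{b_0}$ then gives $o(n^{k/2+1})$, which together with Steps 1--2 proves the lemma.

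\emph{Main obstacle.} The delicate point is exactly the last step: to control the displayed sum once $r_{b_0}=|u_{b_0}|$ has been summed out one needs $N^{-1}\sum_{d\le N}|\rho_r(d)|\to0$ in a way that does not deteriorate too badly as $r\to\infty$, whereas $r_{b_0}$ can be of order $n$. I would handle this by cutting the sum over $r_{b_0}$ at a \emph{fixed} threshold $R$: for $r_{b_0}\le R$ only the finitely many processes $Z^{(0)},\dots,Z^{(R)}$ occur, so the Cesàro bound is uniform and Step 3 applies verbatim; for $r_{b_0}>R$ one exploits instead the companion factor $\rho_{r_{b_1}}(|s_{j_0}-s_{m_0}|)$ of the crossing pair, which by the mirror computation equals $\rho_{r_{b_1}}(|u_{b_0}-c'|)$ with $r_{b_1}=|u_{b_1}|$ now frozen, together with the fact that a block on diagonal $r$ confines two of the vertex coordinates to an interval of length $n-r$. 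Arranging this two-threshold estimate so that the `long-diagonal' part is genuinely $o(n^{k/2+1})$ --- or, failing that, reading the hypothesis as decay uniform in $r$ --- is the real work; everything else is routine bookkeeping.
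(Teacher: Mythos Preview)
Your argument is correct in substance and runs parallel to the paper's. The paper adds one preliminary reduction you omit: before locating a crossing it iteratively peels off any block of the form $\{l,l+1\}$ (for which $a(P_l)=a(P_{l+1})$ and the factor is exactly~$1$), paying a factor~$n$ each time and passing to a smaller pair partition $\pi^{(r)}$ that is still crossing but has no adjacent blocks. It then picks the block $\{i,i+j\}$ with \emph{minimal} gap $j$, fixes the two endpoints $p_i$ and $q_{i+j}$, and counts the remaining choices as $n^{k/2-r-1}$; since (exactly as in your computation) $q_{i+j}-p_i=\sum_{i<\ell<i+j}t_\ell$ equals the lag $s_{i+j}-s_i$, this yields the bound $n^{k/2-1}\sum_{p,q}|\cov(|q-p|)|\le n^{k/2}\sum_{t=0}^{n}|\cov(t)|$. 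Your route---going straight to crossing witnesses $i_0<j_0<l_0<m_0$, freezing $u_{b_1}$ last, and using that $d_{b_0}=|u_{b_1}-c|$ while $r_{b_0}$ is independent of $u_{b_1}$---reaches the same estimate without the peeling step; the peeling buys only a tidier count, not a stronger conclusion.

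On your ``main obstacle'': you have diagnosed it exactly, and the paper's resolution is precisely your fallback reading. The paper defines $\cov(t):=\sup_{p,q}\cov_{|p-q|}(t)=\sup_r|\rho_r(t)|$ and then writes ``by assumption $\cov(t)\to0$'', i.e.\ it treats the decay as \emph{uniform in the diagonal index $r$} and finishes by Ces\`aro: $n^{-1}\sum_{t\le n}\cov(t)\to0$. Your two-threshold scheme (cut $r_{b_0}$ at $R$, swap to the mirror factor $\rho_{r_{b_1}}(|u_{b_0}-c'|)$ on $r_{b_0}>R$) does not close the gap under purely pointwise decay, because after the swap $r_{b_1}=|u_{b_1}|$ again ranges over all of $\{1,\dots,n\}$ and you face the same lack of uniformity; the $(n-r)$-window observation only shaves a constant and does not change the order. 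So there is nothing more to do here than what the paper does: take the supremum over $r$, apply Ces\`aro once, and conclude $o(n^{k/2+1})$.
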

\begin{proof}
Let $\pi$ be crossing and consider $\left(P_1,\ldots,P_k\right)\in S_{n}^{*}(\pi)$. If there is $l\in\left\{1,\ldots,k\right\}$ with $l\sim_\pi l+1$, we have
 $a(P_l) = a(P_{l+1})$,
since $q_l = p_{l+1}$ by consistency and then $p_l = q_{l+1}$ by definition of $S_{n}^{*}(\pi)$. Thus,
 $\E\left[a(P_l) \cdot a(P_{l+1})\right] = 1$.
After erasing $P_l, P_{l+1}$, the remaining sequence $\left(P_1,\ldots, P_{l-1},P_{l+2}, \ldots,P_k\right)$ is still consistent.
There are at most $n$ choices for $q_l = p_{l+1}$, hence we obtain
 $\# S_{n}^{*}(\pi) \leq n \cdot \# S_{n}^{*}(\pi^{(1)}),$
where $\pi^{(1)}$
is the pair partition induced by $\pi$ after eliminating the indices $l$ and $l+1$. If $r$ denotes the maximum number of index pairs that can be eliminated by removing neighbouring equivalent pairs, there are at least two pairs left, since $\pi$ is non-crossing. Thus $r\leq\frac{k}{2}-2$. Hence,
$
 \# S_{n}^{*}(\pi) \leq n^r \cdot \# S_{n}^{*}(\pi^{(r)}),$
where $\pi^{(r)}$
is the pair partition induced by $\pi$. Thus,
\begin{align}
\sum_{\left(P_1,\ldots,P_k\right)\atop \in S_{n}^{*}(\pi)} \left|\E\left[a(P_1)\cdot \ldots \cdot a(P_k)\right]\right|
\leq n^r \sum_{(P^{(r)}_1,\ldots,P^{(r)}_{k-2r})\atop \in S_{n}^{*}(\pi^{(r)})} \left|\E\left[a(P^{(r)}_1)\cdot \ldots \cdot a(P^{(r)}_k)\right]\right|.
\label{estimate}
\end{align}
 Now we choose a pair $(i,j)$ such that $i\sim_{\pi^{(r)}} i+j$ and $j$ is minimal. We count the number of sequences $(P_1^{(r)},\ldots,P_{k-2r}^{(r)})\in S_{n}^{*}(\pi^{(r)})$ with fixed $p^{(r)}_i$ and $q^{(r)}_{i+j}$. $q^{(r)}_i$ takes one of $n$ possible values. Then
$ p^{(r)}_{i+j} = q^{(r)}_i - p^{(r)}_i + q^{(r)}_{i+j}.$
Since $j$ is minimal, any element in $\left\{i+1,\ldots,i+j-1\right\}$ is equivalent to some element outside the set $\left\{i,\ldots,i+j\right\}$. There are $n$ possibilities to fix $P^{(r)}_{i+1}$ as $p^{(r)}_{i+1}=q^{(r)}_i$ is already fixed. In the same way, we have $n$ possibilities for the choice of any pair $P^{(r)}_l$ with $l\in \left\{i+2,\ldots,i+j-2\right\}$ and there is only one choice for $P^{(r)}_{i+j-1}$ since $q^{(r)}_{i+j-1}=p^{(r)}_{i+j}$ is already chosen. For all other pairs that has not yet been fixed, there are at most $n$ possibilities if it is not equivalent to one pair that has already been chosen. Otherwise, there is only one possibility. So, if $p^{(r)}_i$ and $q^{(r)}_{i+j}$ are fixed, we have at most
 $n \cdot n^{j-2} \cdot n^{\frac{k}{2}-r-j} = n^{\frac{k}{2}-r-1}$
possibilities to choose the rest of the sequence $(P^{(r)}_1,\ldots,P^{(r)}_{k-2r})\in S_{n}^{*}(\pi^{(r)})$. So \eqref{estimate} becomes 
\begin{align} \label{estimate2}
\sum_{\left(P_1,\ldots,P_k\right)\in S_{n}^{*}(\pi)} \left|\E\left[a(P_1)\cdot \ldots \cdot a(P_k)\right]\right| & \leq n^{\frac{k}{2}-1} \sum_{p^{(r)}_i,q^{(r)}_{i+j}=1}^{n} | \cov(|q^{(r)}_{i+j}-p^{(r)}_i|) |
\end{align}
where we set $\cov_{|p-q|}(t) := \cov(a(p,q),a(p+t,q+t))$ and $\cov(t):= \sup_{p,q}\cov_{|p-q|}(t)$.
This in turn can be estimated by
\begin{equation}\label{estimate3}
n^{\frac{k}{2}-1} \sum_{p^{(r)}_i,q^{(r)}_{i+j}=1}^{n} | \cov(|q^{(r)}_{i+j}-p^{(r)}_i|) | \le
n^{\frac{k}{2}} \sum_{t=0}^n \cov(t) = n^{\frac{k}{2}+1} (\frac 1 n \sum_{t=0}^n \cov(t)).
\end{equation}
Now by assumption $\cov(t) \to 0$ as $t \to \infty$, hence $(\frac 1 n \sum_{t=0}^n \cov(t))=o(1)$. Thus the right hand side of \eqref{estimate3} is $o(n^{\frac k2 +1}$ as asserted.
\end{proof}

\noindent Lemma~\ref{crossing} means that we need only to concentrate on non-crossing pair partitions.
\begin{lemma}
 Let $\pi \in \nc$. For any $\left(P_1,\ldots,P_k\right)\in S_{n}^{*}(\pi)$, we have
$ \E\left[a(P_1)\cdot\ldots\cdot a(P_k)\right] = 1.$
\label{le1}
\end{lemma}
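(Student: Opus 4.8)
The plan is to argue by induction on $k$ (which must be even for a pair partition to exist), exploiting the recursive structure of non-crossing pair partitions. First I would recall the elementary fact already used in the proof of Lemma~\ref{crossing}: every non-crossing pair partition $\pi$ of $\{1,\dots,k\}$ (with $k+1$ identified with $1$, as dictated by consistency) has a \emph{neighbouring} pair, i.e.\ an index $l$ with $l\sim_\pi l+1$. Fixing such an $l$ and a sequence $(P_1,\dots,P_k)\in S_{n}^{*}(\pi)$, consistency gives $q_l=p_{l+1}$ and the defining property of $S_{n}^{*}(\pi)$ gives $q_l-p_l=p_{l+1}-q_{l+1}$, whence $p_l=q_{l+1}$ and therefore $P_{l+1}=(q_l,p_l)$. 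Consequently $a(P_l)=a(p_l,q_l)=a(q_l,p_l)=a(P_{l+1})$, so $a(P_l)\,a(P_{l+1})=a(p_l,q_l)^2$ (the boundary case $l=k$ is identical after the cyclic relabelling).

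The decisive observation is that this square occupies the diagonal $r_0:=|p_l-q_l|$ by itself. Indeed, by $\pi$-consistency the set of indices $i$ with $|p_i-q_i|=r_0$ is exactly the block of $\pi$ containing $l$, and since $\pi$ is a pair partition that block is $\{l,l+1\}$. Hence every surviving factor $a(P_i)$ with $i\notin\{l,l+1\}$ is measurable with respect to the diagonals $\{a(p,p+r):p\in\N\}$ with $r\neq r_0$, a family independent of the diagonal $r_0$ by hypothesis. Factorising the expectation along this independence and using $\E[a(p,q)^2]=1$ gives
\[
 \E\Big[\prod_{i=1}^{k} a(P_i)\Big]=\E\big[a(p_l,q_l)^2\big]\cdot\E\Big[\prod_{i\neq l,\,l+1} a(P_i)\Big]=\E\Big[\prod_{i\neq l,\,l+1} a(P_i)\Big].
\]

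It then remains to verify that the reduced sequence $(P_1,\dots,P_{l-1},P_{l+2},\dots,P_k)$, read cyclically as a sequence of length $k-2$, lies in $S_{n}^{*}(\pi')$, where $\pi'$ is the pair partition induced by $\pi$ on $\{1,\dots,k\}\setminus\{l,l+1\}$, and that $\pi'$ is again non-crossing. Consistency of the reduction is the chain $q_{l-1}=p_l=q_{l+1}=p_{l+2}$ (outer equalities from consistency of the original sequence, the middle one from $P_{l+1}=(q_l,p_l)$ established above); the $S_{n}^{*}$ condition, $\pi$-consistency, and non-crossingness all pass to the restricted index set verbatim. The induction hypothesis then yields $\E[\prod_{i\neq l,\,l+1} a(P_i)]=1$, and the base case $k=2$ is precisely the computation above with no remaining factors. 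I expect the only genuinely delicate point to be the independence/measurability step: one must make sure that no surviving $a(P_i)$ lives on diagonal $r_0$, which is exactly where $\pi$-consistency is needed (mere consistency or the $S_{n}^{*}$ property would not suffice); once this is in place, the factorisation and the induction are routine.
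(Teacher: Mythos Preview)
Your argument is correct and rests on the same two ingredients as the paper's proof: the existence of a neighbouring block $\{l,l+1\}$ in any non-crossing pair partition, together with the observation that for $(P_1,\dots,P_k)\in S_n^*(\pi)$ such a block forces $a(P_l)=a(P_{l+1})$ and lies alone on its diagonal. The only difference is organisational: you peel off one neighbouring block at a time and invoke induction, whereas the paper iterates the reduction inside an arbitrary block $\{l,m\}$ to show the stronger fact $a(P_l)=a(P_m)$ for \emph{every} block, and then factorises $\E\big[\prod_i a(P_i)\big]=\prod_{\{l,m\}\in\pi}\E[a(P_l)a(P_m)]=1$ in one stroke; your version is in fact more explicit than the paper about why the independence-of-diagonals hypothesis justifies the factorisation.
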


\begin{proof}
Take $l<m$ with $m\sim_\pi l$. Since $\pi \in \nc$, $l-m-1$ is even. Hence, there is $l\leq i< j\leq m$ with $i\sim_\pi j$ and $j=i+1$. Therefore, as above, we have $a(P_i)=a(P_j)$, and $\left(P_1,\ldots, P_l,\ldots,P_{i-1},P_{i+2},\ldots,P_m,\ldots,P_k\right)$ is still consistent. Continuing like this we see that all pairs between $l$ and $m$ vanish and that the sequence $\left(P_1,\ldots,P_l,P_m,\ldots,P_k\right)$ is consistent, that is $q_l=p_m$. Then, the identity $p_l=q_m$ also holds. In particular, $a(P_l)=a(P_m)$. Since $l,m$ have been chosen arbitrarily, we obtain
$ \E\left[a(P_1)\cdot\ldots\cdot a(P_k)\right] = \prod_{\stackrel{l< m}{l\sim_\pi m}} \E\left[a(P_l)\cdot a(P_m)\right] = 1.
$
\end{proof}
We finally check:

\begin{lemma}
 For any $\pi\in\nc$, we have
\begin{equation*}
 \lim_{n\to\infty} \frac{\# S_{n}^{*}(\pi)}{n^{\frac{k}{2}+1}} = 1.
\end{equation*}
\label{le2}
\end{lemma}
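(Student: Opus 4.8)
## Proof Plan for Lemma~\ref{le2}

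The plan is to count $\# S_n^*(\pi)$ for a fixed non-crossing pair partition $\pi \in \nc$ by exploiting the recursive structure of non-crossing pair partitions, and to show that the count is asymptotically $n^{k/2+1}$ from both sides. The upper bound $\# S_n^*(\pi) \le C \cdot n^{k/2+1}$ is already essentially contained in the counting argument given earlier in the proof (when $r = \#\pi = k/2$, one gets at most $n^{r+1} = n^{k/2+1}$ choices), so the real content is the matching lower bound and the sharpening of the constant to $1$.

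First I would use the standard fact that any non-crossing pair partition $\pi$ of $\{1,\ldots,k\}$ contains a pair of \emph{neighbours}, i.e. indices $i \sim_\pi i+1$ (with $k+1$ identified with $1$). This is the same observation used in Lemmas~\ref{crossing} and~\ref{le1}. The idea is to set up a bijection-flavoured recursion: given $(P_1,\ldots,P_k) \in S_n^*(\pi)$ with $i \sim_\pi i+1$, by consistency $q_i = p_{i+1}$ and by the $S_n^*$-condition $p_i = q_{i+1}$, so $P_{i+1}$ is completely determined by $P_i$ together with the single free value $q_i \in \{1,\ldots,n\}$. Erasing indices $i, i+1$ yields a consistent $\pi^{(1)}$-sequence of length $k-2$, where $\pi^{(1)} \in \mathcal{NPP}(k-2)$ is again non-crossing. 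Thus $\# S_n^*(\pi) = n \cdot \# S_n^*(\pi^{(1)}) \cdot (1 + o(1))$, where the $(1+o(1))$ accounts for the loss of sequences where the newly reconstructed pair $P_{i+1}$ would coincide in index-set with, or otherwise conflict with, an already-placed pair — but such coincidences force an extra linear constraint and hence contribute only $O(n^{k/2})$, which is negligible. Iterating this recursion $k/2$ times, collapsing $\pi$ all the way down to the empty partition, each step contributing one free factor of $n$, gives $\# S_n^*(\pi) = n^{k/2}\cdot(\text{number of starting points})\cdot(1+o(1))$. The remaining factor is the $n$ choices for the single "root" vertex $p_1$, so the product is $n^{k/2+1}(1+o(1))$, which is the claim.

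For the lower bound made explicit: I would observe that \emph{any} assignment obtained by freely choosing $q_i \in \{1,\ldots,n\}$ at each of the $k/2$ recursion steps, together with a free choice of the root, produces an element of $S_n^*(\pi)$ \emph{provided} all the reconstructed pairs stay in $\{1,\ldots,n\}^2$ and no "accidental" extra identification between inequivalent indices occurs. The number of choices that violate the range restriction or create an accidental coincidence is $o(n^{k/2+1})$ — each such event is cut out by at least one additional equation among the $\le k$ coordinates — so the count is $n^{k/2+1}(1+o(1))$, and combined with the easy upper bound $\# S_n^*(\pi) \le C n^{k/2+1}$ this forces $\# S_n^*(\pi)/n^{k/2+1} \to 1$.

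The main obstacle is bookkeeping: making the recursion $\pi \mapsto \pi^{(1)}$ genuinely well-defined on non-crossing pair partitions (it is — non-crossingness is preserved under deleting an adjacent pair) and rigorously controlling that the degenerate configurations — where a reconstructed entry lands outside $\{1,\ldots,n\}$, or where two a priori distinct pairs happen to take the same value and thereby reduce the effective number of free parameters — really do form an $o(n^{k/2+1})$ set. This is the standard "boundary effects are negligible" argument from the Wigner moment method; it can either be done by the explicit stepwise counting sketched above or by citing the analogous computation in \cite{FL13} or \cite{brycdembo}. I would do it by the stepwise count for self-containedness, since the recursion is short.
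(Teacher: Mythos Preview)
Your proposal is correct; the recursive peeling of adjacent pairs in a non-crossing pair partition, each step contributing a free factor of $n$ up to $o(1)$ corrections from boundary and coincidence constraints, is the standard argument. The paper itself gives no details at all here---it simply observes that the count is independent of the stochastic structure of the entries and cites \cite{FL13}---so your sketch is in fact more explicit than the paper's proof, but it is the same argument that \cite{FL13} (and \cite{brycdembo}) carries out.
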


\begin{proof}
The proof does not depend on the properties of our stochastic process. It can e.g. be found in \cite{FL13}.
\end{proof}
We thus arrive at
$ \lim_{n\to\infty} \frac{1}{n} \E\left[\tr\left(\X_{n}^{k}\right)\right] =
\lim_{n\to\infty} \frac{1}{n^{1+\frac{k}{2}}} \sum_{\pi\in\nc} \# S_{n}^{*}(\pi) = \# \nc.
$
Now it is well known that $\# \nc$ equals $C_{\frac{k}{2}}$, and we see that the expected ESD of $\X_n$ tends to the semi-circle law. This is the convergence in expectation. \\

The almost sure convergence follows ideas from above and also techniques from \cite{brycdembo}. The idea is to use an appropriate Markov inequality together with the Borel-Cantelli-Lemma. To this end we show that under the
conditions of Theorem~\ref{main} for any $k,n \in\mathbb{N}$,
\begin{equation}\label{Markov4}
A_4:=
 \E\left[\left(\tr\left(\X_{n}^{k}\right) - \E\left[\tr \left(\X_{n}^{k}\right)\right]\right)^4\right] \leq C \cdot n^{2}.
\end{equation}
From here the almost sure convergence follows: For any $\varepsilon>0$ and any $k,n\in\mathbb{N}$,
\begin{equation*}
 \mathbb{P}\left( \left| \frac{1}{n} \tr\X_n^k - \E \left[\frac{1}{n}\tr\X_n^k\right] \right| > \varepsilon \right) \leq \frac{C}{\varepsilon^4 n^2}.
\end{equation*}
Hence, the convergence in expectation part shown above together with the Borel-Cantelli lemma yield that
$
 \lim_{n\to\infty} \frac{1}{n} \tr\X_n^k$
almost surely equals the $k$'th moment of the semicircle distribution.
Thus, we have that, with probability $1$, the ESD  of $\X_n$ converges weakly to the semi-circle law.

To see \eqref{Markov4}
we compute:
\begin{equation}
A_4 := \frac{1}{n^{2k}} \sum_{\pi^{(1)},\ldots,\pi^{(4)} \in \mathcal{P}(k)} \sum_{P^{(i)}\in S_n\left(\pi^{(i)}\right), i=1,\ldots,4} \E\Big[\prod_{j=1}^{4} \left(a (P^{(j)}) - \E\left[a (P^{(j)})\right]\right)\Big].
\label{eq1}
\end{equation}
Here we write
$P = (P_1,\ldots,P_k) = ( (p_1,q_1), \ldots, (p_k,q_k) )$ and $ a (P) = a(P_{1})\cdot \ldots \cdot a(P_{k})$.
Now consider a partition $\boldsymbol{\pi}$ of $\left\{1,\ldots,4k\right\}$. We call $(P^{(1)},\ldots,P^{(4)})$ $\bpi$ consistent if each $P^{(i)}, i=1,\ldots,4$, is a consistent sequence and
$\big|q_{l}^{(i)} - p_{l}^{(i)}\big| \ = \ \left|q_{m}^{(j)} - p_{m}^{(j)}\right| \Longleftrightarrow l + (i-1) k \ \sim_{\boldsymbol{\pi}} \ m + (j-1) k.
$
Let $\mathcal{S}_n (\bpi)$ denote the set of all $\bpi$ consistent sequences with entries in $\left\{1,\ldots,n\right\}$. Then, \eqref{eq1} becomes
\begin{equation}
A_4 = \frac{1}{n^{2k}} \sum_{\bpi\in\mathcal{P}(4k)} \sum_{(P^{(1)}, \ldots, P^{(4)})\in \mathcal{S}_n\left(\bpi\right)} \E\Big[\prod_{j=1}^{4} \left(a (P^{(j)}) - \E\left[a (P^{(j)})\right]\right)\Big].
\label{eq2}
\end{equation}
Fix a $\bpi \in \mathcal{P}(4k)$. We call $\boldsymbol{\pi}$ a matched partition if
\begin{enumerate}
 \item any equivalence class of $\bpi$ contains at least two elements and
 \item for any $i\in\left\{1,\ldots,4\right\}$ there is a $j\neq i$ and $l,m\in\left\{1,\ldots,k\right\}$ with
	 $l + (i-1) k \ \sim_{\boldsymbol{\pi}} \ m + (j-1) k.$
\end{enumerate}
If $\boldsymbol{\pi}$ is not matched,
$\sum_{(P^{(1)}, \ldots, P^{(4)})\in \mathcal{S}_n\left(\bpi\right)} \E\Big[\prod_{j=1}^{4} \left(a (P^{(j)}) - \E\left[a (P^{(j)})\right]\right)\Big] = 0.$

\noindent 
Therefore, let $\boldsymbol{\pi}$ be a matched partition and denote by $r = \#\bpi$ the number of equivalence classes of $\bpi$. Condition $(i)$ gives $r\leq 2k$. To count all $\bpi$ consistent sequences $(P^{(1)},\ldots,P^{(4)})$, first choose one of at most $n^r$ possibilities to fix the $r$ different equivalence classes. Afterwards, we fix the elements $p_1^{(1)},\ldots,p_1^{(4)}$, which can be done in $n^4$ ways. Since now the differences $|q_{l}^{(i)} - p_{l}^{(i)}|$ are uniquely determined by the choice of the corresponding equivalence classes, we can go on to see that there are at most two choices left for any pair $P_l^{(i)}$. Altogether there are
$ 2^{4k} \cdot n^4 \cdot n^r = C \cdot n^{r+4}$
possibilities to choose $(P^{(1)},\ldots,P^{(4)})$. If now $r\leq 2k-2$, we can conclude that
\begin{equation}
 \# \mathcal{S}_n(\bpi) \leq C \cdot n^{2k+2}.
\label{eq3}
\end{equation}
Hence, it remains to consider the case where $r=2k-1$ and $r=2k$, respectively.

 For $r=2k-1$, we have either two equivalence classes with three elements or one equivalence class with four. For $\bpi$ that is matched, there must be $i\in\left\{1,\ldots,4\right\}$ and $l\in\left\{1,\ldots,k\right\}$ such that $P_l^{(i)}$ is not equivalent to any other pair in the sequence $P^{(i)}$. W.l.o.g., we assume that $i=1$.
Now our approach slightly differs from the one above: 
We now fix all equivalence classes except the one of $P_l^{(1)}$. There are $n^{r-1}$ possibilities to accomplish that. Now we choose again one of $n^4$ possible values for $p_1^{(1)},\ldots,p_1^{(4)}$. Then we fix $q_m^{(1)}$, $m=1,\ldots,l-1$, and start from $q_k^{(1)} = p_1^{(1)}$ to go backwards and obtain the values of $p_{k}^{(1)}, \ldots, p_{l+1}^{(1)}$. Each of these steps admits at most two choices, i.e. $2^{k-1}$ possibilities in total. But now, $P_l^{(1)}$ is uniquely determined since $p_l^{(1)} = q_{l-1}^{(1)}$ and $q_l^{(1)} = p_{l+1}^{(1)}$ by consistency. Thus, we had to make one choice less than before, implying \eqref{eq3}.

Finally, consider the case $r=2k$. Then each equivalence class has exactly two elements. Again we can find $l\in\left\{1,\ldots,k\right\}$ such that $P_l^{(1)}$ is not equivalent to any other pair in the sequence $P^{(1)}$. But additionally, we also have $m\in\left\{1,\ldots,k\right\}$ such that, possibly after relabeling, $P_m^{(2)}$ is neither equivalent to any element in $P^{(1)}$ nor to any other element in $P^{(2)}$. Thus, we can use the same argument as before to see that this time, we can reduce the number of choices to at most $C \cdot n^{r+2} = C \cdot n^{2k+2}$. In conclusion, \eqref{eq3} holds for any matched partition $\bpi$. Thus we obtain \eqref{Markov4}.\\

Finally, let us see that for diagonals with correlations of the same sign that are bounded away from 0 the ESD does not converge to the semi-circle law. To this end it suffices to consider the fourth moment. By the considerations (and with the notation) in the first part of the proof:
\begin{equation*}
 \lim_{n\to\infty} \frac{1}{n} \E\left[\tr\left(\X_{n}^{4}\right)\right] = \lim_{n\to\infty} \frac{1}{n^{3}} \sum_{\pi\in\mathcal{P}\mathcal{P}(4)} \sum_{\left(P_1,\ldots,P_4\right)\in S_{n}^{*}(\pi)} \E\left[a(P_1)\cdot \ldots \cdot a(P_4)\right],
\end{equation*}
and we have seen in Lemmas \ref{le1} and \ref{le2} that the non-crossing partitions contribute exactly $C_2$, i.e. the fourth moment of the semi-circle law.
We will therefore show that the crossing-partitions have non-vanishing contribution and we are done.

\begin{lemma}
If the diagonals have correlations of the same sign that are bounded away from 0, for any crossing $\pi \in \mathcal{P}\mathcal{P}(4) \backslash \mathcal{NPP}(4)$, we have
\begin{equation*}
 \sum_{\left(P_1,\ldots,P_4\right)\in S_{n}^{*}(\pi)} \E\left[a(P_1)\cdot \ldots \cdot a(P_4)\right] \ge K n^{3}.
\end{equation*}
for some constant $0<K< \frac 23$.
\label{crossing2}
\end{lemma}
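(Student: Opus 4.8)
The plan is to exploit that for $k=4$ there is exactly one crossing pair partition of $\{1,2,3,4\}$, namely $\pi=\{\{1,3\},\{2,4\}\}$, so it suffices to prove the bound for this single $\pi$. First I would make $S_{n}^{*}(\pi)$ completely explicit. Writing $P_{j}=(p_{j},q_{j})$ and putting $a:=p_{1}$, $s:=q_{1}-p_{1}$, $u:=q_{2}-p_{2}$, cyclic consistency $q_{j}=p_{j+1}$ forces
\[
P_{1}=(a,a+s),\quad P_{2}=(a+s,a+s+u),\quad P_{3}=(a+s+u,a+u),\quad P_{4}=(a+u,a),
\]
and one checks that the two defining identities of $S_{n}^{*}(\pi)$ then hold automatically, while membership in $S_{n}(\pi)$ reduces to the single extra condition $|s|\neq|u|$ together with $a,a+s,a+u,a+s+u\in\{1,\ldots,n\}$. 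Thus $S_{n}^{*}(\pi)$ is in bijection with $\{(a,b,c,d)\in\{1,\ldots,n\}^{4}:\ a+c=b+d,\ |b-a|\neq|c-b|\}$, and an elementary count of the solutions of $a+c=b+d$ (subtracting the $O(n^{2})$ collisions $|s|=|u|$) gives $\#S_{n}^{*}(\pi)=\tfrac{2}{3}n^{3}+O(n^{2})$.

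Next I would evaluate the expectations. Split $S_{n}^{*}(\pi)$ into the \emph{generic} sequences, those with $s\neq0$ and $u\neq0$, and the rest. The rest has $s=0$ or $u=0$, hence only $O(n^{2})$ elements, and each corresponding expectation is at most $m_{4}$ in modulus by \eqref{holder}. For a generic sequence, $a(P_{1}),a(P_{3})$ lie on the diagonal of distance $|s|$ and $a(P_{2}),a(P_{4})$ on the diagonal of distance $|u|$; since distinct diagonals are independent families and each diagonal process is stationary,
\[
\E\big[a(P_{1})a(P_{2})a(P_{3})a(P_{4})\big]=\E\big[a(P_{1})a(P_{3})\big]\,\E\big[a(P_{2})a(P_{4})\big]=\cov_{|s|}(|u|)\,\cov_{|u|}(|s|),
\]
in the notation $\cov_{|p-q|}(t)$ introduced above. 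By hypothesis there is $\delta\in(0,1]$ such that all correlations $\cov_{r}(t)$ with $t\geq1$ share a common sign and satisfy $|\cov_{r}(t)|\geq\delta$; since $|s|,|u|\geq1$ here, both factors have that common sign and hence each generic term is $\geq\delta^{2}$. Summing,
\[
\sum_{(P_{1},\ldots,P_{4})\in S_{n}^{*}(\pi)}\E\big[a(P_{1})\cdots a(P_{4})\big]\ \geq\ \delta^{2}\Big(\tfrac{2}{3}n^{3}+O(n^{2})\Big)-O(n^{2})\ =\ \tfrac{2}{3}\delta^{2}n^{3}+O(n^{2}),
\]
which is $\geq Kn^{3}$ for all sufficiently large $n$ as soon as one fixes any $K$ with $0<K<\tfrac{2}{3}\delta^{2}\leq\tfrac{2}{3}$.

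I do not expect a genuine obstacle here; the work is bookkeeping. The delicate points are: verifying that the leading constant in $\#S_{n}^{*}(\pi)$ is exactly $\tfrac{2}{3}$ and that the $|s|=|u|$ collisions and the $s=0$ or $u=0$ degeneracies are really $O(n^{2})$; and, when the common correlation sign is negative, noting that a product of two negative correlations is still $\geq\delta^{2}>0$, while the $O(n^{2})$ degenerate terms — where one factor is the variance $1$ and the other has the ``wrong'' sign — remain negligible. The conceptual content is simply that a crossing pairing couples two entries on one diagonal with two entries on a second diagonal, so it contributes a \emph{product of two lag-correlations} instead of a product of variances, and this product stays bounded away from $0$ precisely when the correlations do not decay.
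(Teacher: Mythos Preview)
Your argument is correct and follows essentially the same route as the paper's proof: count $S_{n}^{*}(\pi)$ for the unique crossing pair partition of $\{1,2,3,4\}$, factor $\E[a(P_{1})\cdots a(P_{4})]$ as a product of two diagonal covariances using the independence of distinct diagonals, and invoke the uniform lower bound on the correlations. The paper is much terser---it cites \cite{hammond_miller} for the $\tfrac{2}{3}n^{3}$ lower bound on $\#S_{n}^{*}(\pi)$ and then just points back to the proof of Lemma~\ref{crossing}---whereas you carry out the parametrisation and the count explicitly, and you also handle the $s=0$ or $u=0$ degeneracies and the negative-sign case in detail rather than by a one-line remark. So the difference is one of explicitness, not of strategy.
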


\begin{proof}
The estimates resemble those in the proof of Lemma \ref{crossing}. However, this time we need a lower bound.
We will assume that all correlations are positive (the all negative part is done identically).
Let $\pi$ be crossing. As ist shown in \cite{hammond_miller}, Lemma 2.5 (also cf. \cite{BCG03}, Section 4.3) the number of $\pi$-consistent sequences $\left(P_1,\ldots,P_4\right)\in S_{n}^{*}(\pi)$ is bounded from below by $\frac 2 3 N^3$. Following the lines of the proof of Lemma \ref{crossing} and using the assumptions on the covariances yields the result.
\end{proof}
\begin{example}
\item Assume that the random variables $\left\{a(p,q), 1\leq p\leq q< \infty\right\}$
satisfy $\mathbb{E}\left[a(p,q)\right]=0$, $\mathbb{E}\left[a(p,q)^{2}\right] = 1$ as well as condition \eqref{moment}. Moreover for all $r$ let $Z^{(r)}_p:=(a(p,p+r))_{p \in \Z}$ be a stationary, {\it (strongly) mixing} process and assume that the processes on the upper diagonals are independent for distinct values of $r$. Then the $\left\{a(p,q), 1\leq p\leq q< \infty\right\}$ satisfy the assumptions of Theorem \ref{main}, since strong mixing implies ergodicity as well as decay of correlations to 0.
\end{example}

\end{document}